\newtheorem{theorem}{Theorem}
\newtheorem{lemma}{Lemma}
\newtheorem{proposition}{Proposition}
\newenvironment{proof}{\textbf{Proof}}{\hfill $\Box$ \vspace{10pt}\\}
\newcommand{\R}{\mathbb R}
\newcommand{\N}{\mathbb N}
\newcommand{\T}{\mathbb T}
\newcommand{\C}{\mathbb C}
\newcommand{\Z}{\mathbb Z}
\newcommand{\Q}{\mathbb Q}
\title {Continuous functions with universally divergent Fourier series on small subsets of the circle}
\author{J\"urgen M\"uller}
\begin{document}

\maketitle
\begin{abstract}
It is shown that quasi all continuous functions on the unit circle have the property that, for many small subsets $E$ of the circle, the partial sums of their Fourier series considered as functions restricted to $E$ exhibit certain universality properties.

\end{abstract}

AMS-classification: 42A63,42A65,54E52

Keywords: universal divergence, Fourier series, Baire's theorem\\

\section{Introduction and results}

Since Du Bois Reymond's work it is known that Fourier series of continuous functions may diverge in some points of the unit circle $\T=\R/2\pi\Z$. Of course, in view of Carleson's theorem, this can only happen on a set of vanishing Lebesgue measure. 

Actually, nowadays divergence is seen as generic phenomenon:

Let $(X,d)$ be a complete metric space. A property is said to be satisfied by quasi all $x \in X$ if it is satisfied on a residual set in $X$ (that is, on a set containing a dense $G_\delta$ set).

We consider, for compact sets $E \subset \T$,
\[
C(E):=\{f: E \to \C, \, f \mbox{ continuous}\}, \qquad ||f||_E:=\max_{t \in E } |f(t)|
\]
as well as $||\cdot||:=||\cdot||_\T$. 

Then quasi all $f \in (C(\T), ||\cdot||)$ enjoy the property that their Fourier series diverge in quasi all points of $\T$ (see, for example, Theorem 1.1 in Kahane's expository paper \cite{Kah} which is a rich source of related results). 

The question arises to what extend divergence of $(s_n f)_n$, where $s_n f$ denotes the $n$-th partial sum of the Fourier series of $f$, may take place for such functions. 
Our first result shows that for countable $E \subset \T$ the divergence may be "maximal" on $E$:

\begin{theorem} \label{theorem1}
For each countable set $E \subset \T$ quasi all $f \in C(\T)$ enjoy the property that for every function $h: E \to \C$ there is a subsequence of $(s_n f)$ converging pointwise to $h$ on $E$. 
\end{theorem}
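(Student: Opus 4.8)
The plan is to recast the statement as a universality (dense-orbit) property and then verify it through the Baire category theorem. Enumerate $E=\{t_1,t_2,\dots\}$ and equip $Y:=\C^E$ with the product topology; since $E$ is countable, $Y$ is a separable, completely metrizable topological vector space in which convergence is exactly pointwise convergence on $E$. The maps $T_n\colon C(\T)\to Y$, $T_n f:=(s_nf)|_E=\big((s_nf)(t)\big)_{t\in E}$, are continuous and linear. The first observation is that the property in the theorem holds for $f$ precisely when the orbit $\{T_nf:n\in\N\}$ is dense in $Y$: in a metric space a sequence is dense if and only if every point is the limit of some subsequence, and a subsequence of $(T_nf)$ converging to $h$ in $Y$ is exactly a subsequence of $(s_nf)$ converging to $h$ pointwise on $E$. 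Thus I want to show that the set $U$ of $f$ with dense orbit is residual.

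Next I would invoke the standard universality criterion. The set $U=\bigcap_k\bigcup_n T_n^{-1}(B_k)$, with $(B_k)$ a countable base of $Y$, is a $G_\delta$; since $C(\T)$ is complete, $U$ will be residual as soon as it is dense, and for this it suffices that the sequence $(T_n)$ be topologically transitive. Unwinding the product topology (basic open sets constrain only finitely many coordinates) and the norm topology of $C(\T)$, transitivity reduces to the following concrete task: given $f_0\in C(\T)$, $\delta>0$, finitely many distinct points $t_1,\dots,t_m\in E$, targets $b_1,\dots,b_m\in\C$ and $\varepsilon>0$, produce $g$ with $\|g-f_0\|<\delta$ and an index $n$ with $|(s_ng)(t_j)-b_j|<\varepsilon$ for all $j$.

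To reduce the task to a pure construction, first approximate $f_0$ by a trigonometric polynomial $q$ with $\|f_0-q\|<\delta/2$; for $n\ge\deg q$ one has $s_nq=q$, so it remains to add a perturbation $p$ with $\|p\|<\delta/2$ realizing $(s_np)(t_j)\approx b_j-q(t_j)$. The engine here is the unboundedness of the partial-sum functionals $f\mapsto(s_nf)(t)$, whose norms are the Lebesgue constants $L_n\sim\frac{4}{\pi^2}\log n\to\infty$: for continuous approximants $\eta_n\approx\operatorname{sgn}(D_n)$ of the sign of the Dirichlet kernel $D_n$ one has $\|\eta_n\|\le1$ but $(s_n\eta_n)(0)\approx L_n$. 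I would build $p$ as a superposition $p=\sum_j\lambda_j\,\eta_n(\cdot-t_j)$ of translates of a single such $\eta_n$; since $s_n$ commutes with translation, $(s_np)(t_i)=\sum_j\lambda_j\,(s_n\eta_n)(t_i-t_j)$, so the coefficient vector $\lambda$ solves a linear system with matrix $A_n=\big[(s_n\eta_n)(t_i-t_j)\big]_{i,j}$, whose diagonal carries the large entry $(s_n\eta_n)(0)\approx L_n$.

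The crux, and what I expect to be the main obstacle, is the off-diagonal control: one must show that $(s_n\eta_n)(u)=o(L_n)$ (indeed $O(1)$) for each fixed $u\ne0$, i.e.\ that the partial sum of the extremal sign function is genuinely concentrated near its peak and stays bounded at the fixed separations $u=t_i-t_j\ne0$. Granting this estimate, $A_n=L_nI+o(L_n)$ is diagonally dominant for large $n$, hence invertible with $\|\lambda\|=O(\|b\|/L_n)$; then $\|p\|\le\sum_j|\lambda_j|\,\|\eta_n\|=O(\|b\|/L_n)\to0$, so eventually $\|p\|<\delta/2$ while $(s_np)(t_j)\to b_j-q(t_j)$, which is exactly what transitivity requires. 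The localization estimate for $(s_n\eta_n)(u)$ can be handled by a stationary-phase/averaging computation on $\int\operatorname{sgn}(D_n(x))\,D_n(u-x)\,dx$, or circumvented by choosing a building block with cleaner off-peak decay; either way this is the one step that demands genuine harmonic-analysis estimates rather than soft arguments.
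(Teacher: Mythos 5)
Your reduction is sound: recasting the theorem as density of the orbit $\{(s_nf)|_E : n\in\N\}$ in $\C^E$ with the product topology, noting that $U=\bigcap_k\bigcup_n T_n^{-1}(B_k)$ is a $G_\delta$, and reducing residuality to the transitivity statement (given $f_0$, $\delta$, finitely many points $t_1,\dots,t_m\in E$, targets $b_j$, find $g$ with $\|g-f_0\|<\delta$ and $|(s_ng)(t_j)-b_j|<\varepsilon$ for some $n$) is exactly the right soft framework, and it matches the paper's use of the universality criterion plus Baire over an exhaustion of $E$ by finite sets. The gap is at the point you yourself flag: the off-diagonal estimate $(s_n\eta_n)(u)=o(L_n)$ (or $O(1)$) for fixed $u\neq 0$, where $\eta_n\approx\mathrm{sgn}(D_n)$, is \emph{asserted}, not proved. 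Everything downstream --- diagonal dominance of $A_n=\big[(s_n\eta_n)(t_i-t_j)\big]$, invertibility, $\|\lambda\|=O(1/L_n)$, and hence $\|p\|\to 0$ --- is conditional on it. The estimate is in fact true and provable along the lines you hint at: writing $\mathrm{sgn}\,D_n(x)=\mathrm{sgn}(x)\,\mathrm{sgn}\sin\big((n+\tfrac12)x\big)$ on $(-\pi,\pi)$, expanding $\sin\big((n+\tfrac12)(u-x)\big)$ by the addition formula, and using that $\mathrm{sgn}(\sin\theta)\cos\theta$ has mean zero over a period while $\mathrm{sgn}(\sin\theta)\sin\theta=|\sin\theta|$ has mean $2/\pi$, one is left with a principal-value integral that is bounded for each fixed $u\neq 0$ (with constant depending on the minimal separation of the $t_j$, which is fixed here). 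But as submitted, the proof is incomplete precisely at its quantitative crux, and "stationary phase or a cleaner building block" is a plan, not an argument.

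It is worth seeing how the paper avoids this entirely, because its route shows the hard estimate is unnecessary. For a single point, the paper uses only Banach--Steinhaus: since $\|D_n\|_{L_1}\to\infty$, for quasi all $h\in C(\T)$ the sequence $\big(s_nh(0)\big)_{n\in\Lambda}$ is unbounded along \emph{any} prescribed infinite $\Lambda\subset\N$, and rescaling such an $h$ hits any target value --- no extremal functions and no Lebesgue-constant asymptotics are needed. The passage from $m$ points to $m+1$ is then done by induction: a perturbation $u$ handling the old points (along some infinite $\Lambda$), a perturbation $v$ handling the new point (along an infinite $\Lambda'\subset\Lambda$; this nesting of index sets is what makes the induction work), and a cutoff $\varphi$ equal to $1$ near the old points and $0$ near the new one. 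The Riemann localization principle ($s_n(u\varphi)-s_nu\to 0$ uniformly on compact sets where $\varphi\equiv 1$, and $s_n(u\varphi)\to 0$ locally uniformly where $\varphi\equiv 0$) then decouples the two perturbations asymptotically, which is exactly the "off-diagonal smallness" your matrix argument is trying to establish by hand --- obtained here as a soft consequence of a classical theorem. So either complete your kernel estimate as sketched above, or replace the superposition-of-translates construction by the induction-plus-localization scheme; as it stands, your proposal is a correct strategy with its central analytic step missing.
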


Proofs are postponed to Section 2.\\

The question arises which uncountable sets $E$ show a "pointwise universality" property as in Theorem \ref{theorem1} for some $f \in C(\T)$. Of course, possible limit functions are resticted to the first Baire class. It is well-known that trigonometric series (and even Taylor series of functions holomorphic in the unit disk) exist with the property that every function of the first Baire class is a pointwise limit of a subsequence of the series on $\T$ (see, e.g. \cite{Kah}, Corollary 2.3, and \cite{Nes}). However, in the case of Fourier series of continuous functions Carleson's theorem shows that $E$ must have vanishing Lebesgue measure and, moreover, it turns out that universal divergence sets     
also have to be topologicaly small in the sense of being of first category. This is in some contrast to the well-known fact that every set $E$ of vanishing measure is a set of (unbounded) divergence for some $f \in C(\T)$ (see, e. g. \cite{Kat}, Theorem II 3.4). 

In fact, from Carleson's theorem we can easily obtain that $f$ has special attraction as limit point not only almost everywhere but also quasi everywhere:\\

{\it If $f\in C(\T)$ and $(n_j)$ are so that $(s_{n_j} f)_j$ converges pointwise to some function $h$ on a set $A\subset \T$, then $h$ differs from $f$ at most on a set of first category.\\}
 
This follows immediately from 

\begin{proposition}\label{prop}
If $f\in C(\T)$ and $(s_{n_j} f)_j$ is a subsequence of $(s_n f)_n$ then there is a dense $G_\delta$-subset $E$ of $\T$ so that for all $t\in E$ a subsequence of $(s_{n_j} f(t))$ tends to $f(t)$.
\end{proposition}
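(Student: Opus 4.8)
The plan is to realize the desired set as a natural $G_\delta$ set and then apply Baire's theorem, with Carleson's theorem supplying the density. First I would note that the requirement that a subsequence of $(s_{n_j}f(t))_j$ tend to $f(t)$ is equivalent to $\liminf_j |s_{n_j}f(t)-f(t)|=0$, i.e. to $f(t)$ being a cluster point of that sequence. Writing $g_j:=|s_{n_j}f-f|$, which is continuous on $\T$ because each $s_{n_j}f$ is a trigonometric polynomial, I would set
\[
U_{k,N}:=\bigcup_{j\geq N}\{t\in\T:\ g_j(t)<1/k\},\qquad E:=\bigcap_{k,N\in\N}U_{k,N}.
\]
Each set $\{g_j<1/k\}$ is open, so every $U_{k,N}$ is open and $E$ is a $G_\delta$ set; moreover $t\in E$ holds exactly when for each $k$ and each $N$ there is some $j\geq N$ with $g_j(t)<1/k$, which is precisely the condition $\liminf_j g_j(t)=0$.

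It then remains to prove that each $U_{k,N}$ is dense, for then Baire's theorem on the compact (hence complete) metric space $\T$ yields that $E$ is a dense $G_\delta$ set, as required. This is where Carleson's theorem is used: $s_nf\to f$ almost everywhere on $\T$, and since any subsequence of a pointwise convergent sequence has the same limit, also $s_{n_j}f(t)\to f(t)$ for almost every $t$, so $g_j(t)\to 0$ a.e. To check density of $U_{k,N}$, fix a nonempty open interval $I\subset\T$; since $I$ has positive Lebesgue measure while $g_j\to 0$ off a null set, there is at least one $t\in I$ with $g_j(t)\to 0$, whence $g_j(t)<1/k$ for all large $j$ and in particular $t\in U_{k,N}\cap I$. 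Thus $U_{k,N}$ meets every nonempty open set.

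The only point of substance is this density step, and its content is entirely delegated to Carleson's theorem; the rest is routine Baire-category bookkeeping, so I expect no genuine obstacle. I would only be careful to observe that a single almost-everywhere convergence point inside each interval suffices, so that no uniform or quantitative control on the rate of convergence is needed.
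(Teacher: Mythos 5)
Your proposal is correct and follows essentially the same route as the paper: the identical $G_\delta$ representation $E=\bigcap_{k,N}\bigcup_{j\geq N}\{t:|s_{n_j}f(t)-f(t)|<1/k\}$, Baire's theorem to reduce to density of the open sets, and Carleson's theorem (with the observation that subsequences inherit the a.e.\ limit) to supply that density. The only cosmetic difference is that you verify density directly, noting that each interval has positive measure and hence contains a point of a.e.\ convergence, whereas the paper argues by contradiction that a non-dense ${\cal O}_{m,k}$ would force $|s_{n_j}f(t)-f(t)|\geq 1/k$ on a nonempty open set, contradicting convergence on a dense subset -- the same fact in contrapositive form.
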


We may also ask about uniform convergence of subsequences of $(s_n f)$ on subsets $E$ of the unit circle. If $\{s_n f_{|E}: n \in \N\}$ is dense in $C(E)$ we say that $(s_n f)$ is uniformly universal on $E$. 
Denoting by ${\cal K}(\T)$ the (complete) metric space of all compact, nonempty subsets of $\T$ equipped with the Hausdorff metric, we have

\begin{theorem} \label{theorem2}
Quasi all continuous functions on $\T$ enjoy that property that $(s_n f)$ is uniformly universal on quasi all sets $E \in{\cal K}(\T)$. 
\end{theorem}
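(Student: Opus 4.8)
The plan is to work in the product space $C(\T)\times{\cal K}(\T)$, which is a complete separable metric space since both factors are, and to invoke the Kuratowski--Ulam theorem (the category analogue of Fubini's theorem) to pass from a residual set in the product to residual sections. Concretely, I would first fix a countable dense subset $\{p_k:k\in\N\}$ of $C(\T)$, say the trigonometric polynomials with rational coefficients. Since the restriction map $C(\T)\to C(E)$ is surjective (Tietze) and the $p_k$ are dense, the restrictions $\{p_k{}_{|E}\}$ are dense in $C(E)$ for every $E$; hence $(s_nf)$ is uniformly universal on $E$ if and only if for all $k,m\in\N$ there is some $n$ with $||s_nf-p_k||_E<1/m$. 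Accordingly I set
\[
U_{k,m}:=\Big\{(f,E)\in C(\T)\times{\cal K}(\T):\ \exists\,n\in\N\ \text{with}\ ||s_nf-p_k||_E<1/m\Big\},
\]
so that the set of pairs $(f,E)$ for which $(s_nf)$ is uniformly universal on $E$ equals $\bigcap_{k,m}U_{k,m}$.

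Second, I would check that each $U_{k,m}$ is open. For fixed $n$ the map $f\mapsto s_nf$ is a bounded linear operator on $C(\T)$, so $(f,E)\mapsto s_nf-p_k$ is continuous into $C(\T)$; and the map $(g,E)\mapsto\max_{t\in E}|g(t)|$ is jointly continuous on $C(\T)\times{\cal K}(\T)$, using only uniform continuity together with the definition of the Hausdorff metric $d_H$. Composing, $(f,E)\mapsto||s_nf-p_k||_E$ is continuous, so $\{(f,E):||s_nf-p_k||_E<1/m\}$ is open, and $U_{k,m}$, being the union of these sets over $n$, is open.

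Third---and this is where the real content enters---I would prove that each $U_{k,m}$ is dense, and here Theorem \ref{theorem1} does the work. The finite subsets of $\T$ are dense in $({\cal K}(\T),d_H)$, so given any $(f_0,E_0)$ and $\varepsilon>0$ I may first choose a finite set $F$ with $d_H(F,E_0)<\varepsilon/2$. For this fixed finite (hence countable) set $F$, Theorem \ref{theorem1} asserts that quasi all $f$ satisfy: every $h:F\to\C$ is a pointwise limit of a subsequence of $(s_nf)$. Because $F$ is finite, pointwise convergence on $F$ coincides with uniform convergence on $F$, so for quasi all $f$ the set $\{s_nf_{|F}:n\in\N\}$ is dense in $C(F)$. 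Such $f$ form a residual, in particular dense, subset of $C(\T)$, so one of them, say $f$, lies within $\varepsilon$ of $f_0$ and admits some $n$ with $||s_nf-p_k||_F<1/m$. Then $(f,F)\in U_{k,m}$ with $||f-f_0||<\varepsilon$ and $d_H(F,E_0)<\varepsilon$, which proves density.

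Finally, $\bigcap_{k,m}U_{k,m}$ is a dense $G_\delta$, hence residual, in the product. By the Kuratowski--Ulam theorem the set of $f\in C(\T)$ whose section $\{E:(f,E)\in\bigcap_{k,m}U_{k,m}\}$ is residual in ${\cal K}(\T)$ is itself residual in $C(\T)$; for each such $f$, $(s_nf)$ is uniformly universal on quasi all $E$, which is the assertion. I expect the only genuine obstacle to be the density step, which I have reduced to Theorem \ref{theorem1}; the remaining ingredients---joint continuity of $(g,E)\mapsto\max_{t\in E}|g(t)|$ in the Hausdorff metric, and the applicability of Kuratowski--Ulam, legitimate because both factors are second countable---are routine verifications.
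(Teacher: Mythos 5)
Your proof is correct, but it takes a genuinely different route from the paper's. You work in the product space $C(\T)\times{\cal K}(\T)$ and apply the Kuratowski--Ulam theorem to the dense $G_\delta$-set $\bigcap_{k,m}U_{k,m}$; each of your three verifications is sound: the reduction of uniform universality on $E$ to the countably many conditions $U_{k,m}$ (via Tietze and Weierstra{\ss}, exactly as in step 1 of the paper's Lemma \ref{lemma2}), the joint continuity of $(f,E)\mapsto\|s_nf-p_k\|_E$ (using boundedness of $s_n$ on $C(\T)$ and uniform continuity against the Hausdorff metric), and the density step via finite sets together with Theorem \ref{theorem1} (no circularity, since Theorem \ref{theorem1} is proved from Lemma \ref{le1} independently of Theorem \ref{theorem2}; note that for finite $F$ pointwise and uniform convergence coincide, as you say). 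Kuratowski--Ulam applies since ${\cal K}(\T)$ is compact metric, hence second countable, and since the sections of your $G_\delta$-set are themselves $G_\delta$, you even recover that ${\cal E}_f$ is a dense $G_\delta$ for quasi all $f$. The paper avoids the product space entirely: its Lemma \ref{lemma2} shows, for each \emph{fixed} $f$, that ${\cal E}_f=\{E\in{\cal K}(\T):(s_nf)\ \mbox{uniformly universal on}\ E\}$ is a $G_\delta$ in ${\cal K}(\T)$ --- your openness computation, but needed only in the $E$-variable --- and then applies Baire's theorem in $C(\T)$ over the countably many finite subsets of a fixed countable dense set $M\subset\T$, so that for the resulting residual set of $f$ the family ${\cal E}_f$ contains a dense collection of finite sets and is therefore a dense $G_\delta$. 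The trade-off: your argument is conceptually transparent (a category Fubini) but uses the continuity of $f\mapsto s_nf$ and the Kuratowski--Ulam machinery, whereas the paper's Lemma \ref{lemma2} requires no continuity of the maps $T_\alpha$ in the $x$-variable at all, is stated for an arbitrary (even uncountable) index family and an arbitrary complete metric space $X$, exhibits the residual set of good $f$ explicitly, and localizes immediately to ${\cal K}(E_0)$ for any compact $E_0\subset\T$, which is how the paper obtains the localized version mentioned after Theorem \ref{theorem2}; your route localizes as well, but only after restricting the product to $C(\T)\times{\cal K}(E_0)$ and rechecking the density step with finite subsets of $E_0$.
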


It can be shown (see \cite{Koe}) that quasi all elements of ${\cal K}(\T)$ are perfect sets. In particular, there are perfect -- and thus uncountable -- sets $E$ so that $(s_n f)$ is uniformly universal on $E$ for some $f \in C(\T)$.
Again, it would be interesting to know for which sets $E\in {\cal K}(\T)$ such functions exist. Due to Carleson's theorem, they necessarily have to have vanishing Lebesgue measure, and thus are in particular nowhere dense in $\T$. 
The proofs in Section 2 show that a "localized version" of Theorem \ref{theorem2} is true, i.e. for each compact set $E_0 \subset \T$ quasi all $f \in C(\T)$ are so that $(s_n f)$ is uniformly universal on $E$ for quasi all $E \in {\cal{K}}(E_0)$. \\

Since $C(\T)$ is dense in $L_p(\T)$ for all $1 \le p < \infty$, and since $s_n:L_p(\T) \to C(\T)$ is continuous for all $n$, the proofs in Section 2 show that the Theorems \ref{theorem1} and \ref{theorem2} also hold with $L_p(\T)$ instead of $C(\T)$. In view of Kolmogorov's result on the divergence everwhere of Fourier series of (quasi-all) $L_1$-functions (see \cite{Kah}, Theorem 1.5), it is reasonable to suspect that in the $L_1$-case not only sets $E$ of vanishing Lebesgue measure are permitted. However, compact sets $E$ such that $(s_n f)$ is uniformly universal on $E$ for some integrable $f$ still have to be nowhere dense (see \cite{Kah}, p. 157).

\section{Proofs}  

The key tool is

\begin{lemma} \label{le1}
For all finite sets $E\subset \T$ 
\[
\big\{ f\in C(\T): \{s_n f: n \in \N\} \mbox{ dense in } \C^E (=C(E)) \big\}
\]
is a dense $G_\delta$-set in $C(\T)$.
\end{lemma}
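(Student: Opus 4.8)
The plan is to establish the two defining features of a dense $G_\delta$ set separately: the $G_\delta$ part is routine, and the density is the substance. Write $E=\{t_1,\dots,t_m\}$, so that $C(E)=\C^E$ is identified with $\C^m$ and $s_nf_{|E}$ with the vector $(s_nf(t_1),\dots,s_nf(t_m))$. Fixing a countable dense sequence $(w_k)$ in $\C^m$, the set in question equals
\[
\bigcap_{k,p\in\N}\ \bigcup_{n\in\N}\ \big\{f\in C(\T): \|s_nf_{|E}-w_k\|_E<1/p\big\}.
\]
For each fixed $n$ the map $f\mapsto s_nf_{|E}$ is continuous, since $s_n:C(\T)\to C(\T)$ is bounded and evaluation at the finitely many points $t_j$ is continuous; hence each set in the inner union is open, and the whole set is $G_\delta$. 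By Baire's theorem, density will follow once I show that each $W_{k,p}:=\bigcup_n\{\dots\}$ is dense.

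For density I would argue by perturbation. Given $g\in C(\T)$ and $\delta>0$, first approximate by a trigonometric polynomial $Q$ (Fej\'er) with $\|g-Q\|<\delta/2$ and $\deg Q=d$. For every $n\ge d$ one has $s_nQ=Q$, so if the perturbation $P$ has spectrum in $\{|k|>d\}$, then $s_n(Q+P)_{|E}=Q_{|E}+s_nP_{|E}$. Thus it suffices to produce, for the prescribed increments $b_j:=w_{k,j}-Q(t_j)$, a single index $n\ge d$ and a $P$ with $\|P\|<\delta/2$, $\hat P(k)=0$ for $|k|\le d$, and $s_nP(t_j)=b_j$ for $1\le j\le m$; then $f:=Q+P$ lies in $W_{k,p}$ and $\|f-g\|<\delta$. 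The point to keep in mind is that $n$ must be a cutoff lying strictly inside the spectrum of $P$ (so $\deg P>n$): this is exactly what permits $\|P\|$ to be small while $s_nP(t_j)$ is of order one.

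The core claim is therefore: for any target $b\in\C^m$ there is $n\ge d$ and such a $P$. I would phrase this through the convex, balanced set $V_n:=\{(s_nP(t_j))_j:\ \|P\|\le\delta/2,\ \hat P(k)=0\ (|k|\le d)\}\subset\C^m$ and show that its inradius $r_n$ tends to $\infty$, for then $b\in V_n$ as soon as $n$ is large. Writing $s_nP(t_j)=\frac1{2\pi}\int_\T P(x)D_n(t_j-x)\,dx$ with $D_n(u)=\sum_{|k|\le n}e^{iku}$ and passing to the support function, $L^\infty$--$L^1$ duality gives, for $v\in\C^m$,
\[
\sup_{w\in V_n}\mathrm{Re}\,\langle w,v\rangle=\frac{\delta}{4\pi}\,\Big\|\sum_{j=1}^m v_j\,(D_n-D_d)(\cdot-t_j)\Big\|_{L^1(\T)},
\]
the projection $D_n-D_d$ arising from the restriction of $P$ to frequencies $|k|>d$. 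Since a symmetric convex body contains the ball whose radius is the minimum of its support function over the unit sphere, $r_n$ is bounded below by $\delta/(4\pi)$ times the infimum of the right-hand side over $|v|=1$.

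The main obstacle, and the real content, is the lower bound $\big\|\sum_j v_j(D_n-D_d)(\cdot-t_j)\big\|_{L^1}\ge c\,\log n$, uniformly for $|v|=1$. I would prove it by localization and non-cancellation: choose $\rho$ smaller than half the minimal distance between the $t_j$, and let $j^*$ satisfy $|v_{j^*}|=\max_j|v_j|\ge 1/\sqrt m$. On the arc of radius $\rho$ about $t_{j^*}$ the diagonal term contributes $|v_{j^*}|\int_{-\rho}^{\rho}|D_n-D_d|\ge c(\log n)/\sqrt m$, because the Lebesgue constants $\int_{-\rho}^\rho|D_n|$ diverge like $\log n$ while $\int|D_d|$ is bounded; meanwhile every off-diagonal term $(D_n-D_d)(\cdot-t_j)$, $j\ne j^*$, stays bounded on that arc, as $D_n$ is $O(1)$ away from its singularity. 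Hence the $L^1$-masses of the translated kernels sit in disjoint arcs and cannot cancel, so $r_n\to\infty$. This completes the density of each $W_{k,p}$ and, with the $G_\delta$ structure and Baire's theorem, proves the lemma; everything apart from this divergence-plus-separation estimate is soft.
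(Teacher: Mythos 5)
Your proposal is correct in substance, but it takes a genuinely different route from the paper's. The paper proceeds by induction on $|E|$: the one-point case comes from Banach--Steinhaus (for quasi all $f$ the sequence $(s_nf(0))_{n\in\Lambda}$ is unbounded along any infinite $\Lambda$, since the $L^1$-norms of the Dirichlet kernels diverge) together with a rescaling trick, and the inductive step glues a function $u$ handling the old points $F$ to a function $v$ handling the new point by means of a cutoff $\varphi$ and the Riemann localization principle, with Grosse-Erdmann's Universality Criterion doing the Baire-type bookkeeping. You instead give a direct, quantitative, one-shot construction: after reducing to perturbations $P$ with spectrum above $d=\deg Q$, you show via convex duality that the attainable vectors $\big(s_nP(t_j)\big)_j$ fill a ball whose inradius grows like $\delta\log n$, the analytic heart being the uniform lower bound $\big\|\sum_j v_j(D_n-D_d)(\cdot-t_j)\big\|_{L^1}\ge c\log n$ obtained from the divergence of localized Lebesgue constants plus the uniform bound $|D_n(u)|\le 1/\sin(\rho/2)$ away from the singularity. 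Your separation-of-singularities estimate plays exactly the role that the localization principle plays in the paper (decoupling the points), while the support-function bound replaces both the induction and the one-point Banach--Steinhaus step; your argument in fact yields more, namely that \emph{every} sufficiently large single $n$ admits a suitable $P$, whereas the paper only produces some $n$ in a prescribed infinite set. Two small points need patching, neither of which is a genuine gap. First, your duality identity is not exact: the supremum over the \emph{constrained} ball equals $\frac{\delta}{4\pi}\min_{q\in T_d}\|K'-q\|_{L^1}$, where $K'=\sum_j v_j(D_n-D_d)(\cdot-t_j)$ and $T_d$ is the space of trigonometric polynomials of degree $\le d$ (the annihilator of the constraint), and this can be strictly smaller than $\frac{\delta}{4\pi}\|K'\|_{L^1}$; but since $s_d$ has norm $L_d$ on $L^1$ and $s_dK'=0$, one gets $\min_{q\in T_d}\|K'-q\|_{L^1}\ge\|K'\|_{L^1}/(1+L_d)$ with $d$ fixed, so the inradius still tends to infinity. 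Second, $V_n$ as defined with continuous $P$ need not be closed, which the ball-containment argument requires: either replace the $C(\T)$-ball by the $L^\infty$-ball (whose image is compact by weak-$*$ compactness, since each $P\mapsto s_nP(t_j)$ is integration against an $L^1$ function) and smooth back into $C(\T)$ with Fej\'er means, or simply settle for approximate interpolation $|s_nP(t_j)-b_j|<1/(2p)$, which is all that the density of $W_{k,p}$ requires.
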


\begin{proof}

1. We show that for all $g\in C(\T), c\in\C, \varepsilon > 0, \Lambda \subset\N, |\Lambda| = \infty$ there is $f \in C(\T), n\in\Lambda$, so that
\[
\|f - g\| < \varepsilon, \qquad (s_n f)(0) = c.
\]
Indeed: The Weierstra{\ss} approximation theorem implies the existence of a trigonometric polynomial $q$ with $\|g-q\| < \varepsilon/2$. 

For quasi all $f\in C(\T)$ the sequence $\big( s_n f (0)\big)_{n\in\Lambda}$ is unbounded (this is a well-known consequence of the Banach-Steinhaus theorem in the case $\Lambda = \N$ and may be obtained in the same way for arbitrary $\Lambda$, since the $L_1$-norms of the $n$-th Dirichlet kernel tend to infinity). Therefore, there is a $h\in C(\T)$ with $\|h\| < \varepsilon/2$ and $\big|(s_n h)(0)\big| > \big| c - q(0)\big|$ for some $n\in\Lambda, \, n\geq \text{deg}\,(q)$.
Then 
$$
\widetilde{h} := \frac{c-q(0)}{s_n h(0)} \cdot h
$$ 
satisfies
\[
s_n \widetilde{h} (0) = c - q(0) \qquad \text{and} \qquad \|\widetilde{h}\| \leq \|h\| < \varepsilon/2
\]
and thus for $f := \widetilde{h} + q$ we obtain (since $n \geq \text{deq }(q)$)
\[
\|f-g\| < \varepsilon \qquad \text{and} \qquad
(s_n f)(0) = (s_n \widetilde{h}) (0) + q(0) = c.
\]
According to the Universality Criterion (see, for example, \cite{GE}), applied to the sequence
$T_n: C(\T) \to \C$ with $T_n f := (s_n f) (0)$ $(n\in\Lambda, f\in C(\T))$ the set
\[
\big\{ f\in C(\T): \{(s_n f)(0): n\in \Lambda\} \mbox{ dense in } \C \, (= \C^{\{0\}})\big\}
\]
is $G_\delta$-dense in $C(\T)$.\\

2. We prove the assertion by induction on $N = |E|$. 

For $N = 1$ the result follows from part 1 of the proof (without loss of generality we may suppose $E = \{0\}$). 

Let $E \subset \T$ with $|E| = N+1$ be given. Again, we suppose that $0\in E$. We write
$E = F\cup \{0\}$ with $|F| = N$. Then by induction hypothesis, $\{s_n(f): n \in \N\}$ is dense in $\C^F$ for quasi all $f\in C(\T)$. 

The universality criterion, now applied to $T_n :  C(\T) \to \C^E$, $T_n f := s_n f_{|E}, (n\in\N, f\in C(\T)$), shows that it is sufficient (and necessary) to guarantee that for all 
$g\in C(\T), \varepsilon > 0, h: E\to \C$ there exist $f\in C(\T)$ and $n\in\N$ with
\[
\|f-g\| < \varepsilon \qquad\text{and} \qquad \|s_n f - h\|_E < \varepsilon.
\]
Due to the Weierstra{\ss} approximation theorem, we may suppose that $g$ is a trigonometric polynomial. 

Let $u\in C(\T)$ be so that $\|u\| < \varepsilon/2$, and let $\Lambda\subset\N$ satisfy 
$|\Lambda| = \infty$ and
\[
\big| (s_n u) (t) - \big( h(t) - g(t)\big)\big| < \varepsilon/3 \qquad (n\in\Lambda, t\in F).
\]
By 1. there are $v\in C(\T), \|v\| < \varepsilon/2$ and $\Lambda' \subset\Lambda, |\Lambda'| = \infty$ so that
\[
\big| (s_n v) (0) - \big( h(0) - g(0)\big)\big| < \varepsilon/3 \qquad (n\in \Lambda').
\]
Now, consider $\varphi\in C(\T)$ with $0 \leq \varphi \leq 1$ and
\[
\varphi_{|U} \equiv 1 \; , \qquad \varphi_{|V} \equiv 0
\]
for neighborhoods $U$ of $F$ and $V$ of 0 in $\T$. 
The Localization Principle (see e. g. \cite{Kat}, Theorem II 2.4) implies that
\[
\| s_n (u\varphi) - s_n u\|_F < \varepsilon/3\;, \qquad \big| s_n (u\varphi)(0)\big| < \varepsilon/3
\]
and
\[
\big| s_n \big(v (1-\varphi)\big) (0) - (s_n v)(0)\big| < \varepsilon/3, \quad
\big\| \big(s_n (1-\varphi) v\big)\big\|_F < \varepsilon/3
\]
for $n$ sufficiently large. 
We define 
\[
f := u\varphi + v(1-\varphi) + g.
\]
Then $\|f-g\|\leq \|u\| + \|v\| < \varepsilon$ and for $n\in \Lambda'$ with $n\geq \text{deg} (g)$ we have (since $s_n g = g$)
\begin{eqnarray*}
\| s_n f - h\|_F & \leq & \big\| s_n \big( v (1-\varphi)\big)\big\| + \| s_n (u\varphi)+ g-h \|_F \\
& \leq & \frac{\varepsilon}{3} + \| s_n (u\varphi) - s_n u\|_F + \big\| s_n u - (h-g)\big\|_F < \varepsilon
\end{eqnarray*}
and similarly 
\[
|(s_n f)(0) - h(0)|  
 \leq \big| (s_n (u\varphi)(0)\big| + \big| \big( s_n (v(1-\varphi))\big) (0) + g(0) - h(0)\big| < \varepsilon.
\]
\end{proof} 

\begin{proof} {\bf of Theorem \ref{theorem1}}

Let $(E_j)_{j \in \N}$ be an exhaustion of $E$ consisting of finite sets. Then by Lemma 1 and Baire's theorem
quasi all $f \in C(\T)$ are so that the partial sums $s_n f$ are dense in $\C^{E_j}$ for all $j \in \N$. If $f$ is of that kind, and if $h:E \to \C$ is given, then for all $j$ there is an integer $n_j > n_{j-1}$ with $||s_{n_j}f-h||_{E_j} < 1/j$. Thus $(s_{n_j}f)$ converges pointwise to $h$ on $E$. 
\end{proof}

\begin {proof} {\bf of Proposition \ref{prop}}

We have 
\begin {eqnarray*}
\{ t\in \T: \exists (j_\ell)_\ell: (s_{n_{j_\ell}} f) (t) \to f(t) \}
& = & \bigcap\limits_{m,k\in\N} \, \bigcup\limits_{j\geq m} 
\Big\{ t: \big| s_{n_j} f(t) - f(t)\big| < \frac{1}{k}\Big\} \\
& =: &\bigcap\limits_{m,k\in\N} {\cal O}_{m,k}
\end{eqnarray*}
with ${\cal O}_{m,k}$ open in $\T$. According to Baire's theorem, it is sufficient to show that 
${\cal O}_{m,k}$ is dense for all $(m,k)$. 

Suppose that there is some $(m,k)$ with ${\cal O}_{m,k}$ not dense in $\T$. Then, for all
$t\in U$ where $U$ is the complement of the closure of ${\cal O}_{m,k}$, 
\[
\big| s_{n_j} f(t) - f(t)\big|\geq \frac{1}{k} \qquad  (j\geq m).
\]
This contradicts the fact that $s_n f(t) \to f(t)$ on a dense subset of $\T$ (following from Carleson's theorem).
\end{proof}

Theorem \ref{theorem2} is an immediate consequence of Lemma \ref{le1} and the following Lemma (applied with $X=C(\T)$, $I=\N$, and $(T_n)=(s_n)$). 
\begin{lemma} \label{lemma2}
Let $X$ be a complete metric space, and let $(T_\alpha)_{\alpha \in I}$ be a family of functions $T_\alpha:X \to C(\T)$. If, for all finite $E \subset \T$, the set $\{T_\alpha(x)_{|E}: \alpha \in I\}$ is dense in $\C^E$ for quasi all $x \in X$, then $\{T_\alpha(x)_{|E}: \alpha \in I\}$ is dense in $C(E)$ for quasi all $x \in X$ and quasi all $E \in {\cal K}(\T)$.  
\end{lemma}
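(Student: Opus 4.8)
The plan is to fix $x$, describe the set of "good" $E$ in ${\cal K}(\T)$ as a dense $G_\delta$, and obtain this by transferring the finite-set hypothesis through a \emph{countable} dense family of finite sets; the only genuine difficulty is that the hypothesis supplies, for each finite $E$, a residual exceptional set that depends on $E$, while there are uncountably many such $E$.

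First I would reduce uniform denseness to countably many conditions. Fix a countable dense set $\{g_k:k\in\N\}$ in $C(\T)$ (for instance the trigonometric polynomials with coefficients in $\Q+i\Q$). By the Tietze extension theorem the restrictions $g_{k|E}$ are dense in $C(E)$ for every $E\in{\cal K}(\T)$, so $\{T_\alpha(x)_{|E}:\alpha\in I\}$ is dense in $C(E)$ if and only if $E\in\bigcap_{k,m}{\cal O}_{k,m}(x)$, where ${\cal O}_{k,m}(x):=\{E\in{\cal K}(\T):\exists\,\alpha\in I,\ \|T_\alpha(x)-g_k\|_E<1/m\}$; a routine $\varepsilon/2$ argument gives the equivalence. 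Next I would check that each ${\cal O}_{k,m}(x)$ is open. For fixed $\psi\in C(\T)$ the map $E\mapsto\|\psi\|_E=\max_{t\in E}|\psi(t)|$ is continuous on ${\cal K}(\T)$ for the Hausdorff metric: if $E_n\to E$, then any maximiser on $E_n$ subconverges to a point of $E$ (giving $\limsup\le\|\psi\|_E$), while a maximiser on $E$ is approximated by points of the $E_n$ (giving $\liminf\ge\|\psi\|_E$). Hence $\{E:\|\psi\|_E<1/m\}$ is open, and ${\cal O}_{k,m}(x)$, a union over $\alpha$ of such sets, is open. Since ${\cal K}(\T)$ is complete, Baire's theorem reduces the claim, for fixed $x$, to showing each ${\cal O}_{k,m}(x)$ is dense.

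The heart of the matter, and the step I expect to be the main obstacle, is producing for every target $E_0$ and every $\eta>0$ an element of ${\cal O}_{k,m}(x)$ within Hausdorff distance $\eta$ of $E_0$, using only the finite-set hypothesis. The trouble is that this hypothesis yields denseness in $\C^E$ only on a residual set $X_E\subseteq X$ that varies with $E$, and the finite sets are uncountable, so one cannot intersect the $X_E$ over all of them. I would resolve this by fixing a countable dense set $D\subseteq\T$ and letting ${\cal F}$ be the (countable) family of all finite subsets of $D$; a short covering argument shows ${\cal F}$ is dense in ${\cal K}(\T)$. Applying the hypothesis to each $F\in{\cal F}$ and setting $X^*:=\bigcap_{F\in{\cal F}}X_F$ then gives a residual subset of $X$, again by Baire's theorem.

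It remains to read off the conclusion for $x\in X^*$. Given $E_0$ and $\eta>0$, choose $F\in{\cal F}$ with Hausdorff distance $<\eta$ from $E_0$; since $\{T_\alpha(x)_{|F}:\alpha\in I\}$ is dense in $\C^F$, some $\alpha$ satisfies $\max_{t\in F}|T_\alpha(x)(t)-g_k(t)|<1/m$, that is, $F\in{\cal O}_{k,m}(x)$ and $F$ lies within $\eta$ of $E_0$. Thus each ${\cal O}_{k,m}(x)$ is dense, so for every $x\in X^*$ the set $\bigcap_{k,m}{\cal O}_{k,m}(x)$ is a dense $G_\delta$ in ${\cal K}(\T)$. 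As $X^*$ is residual in $X$, this is exactly the assertion that for quasi all $x\in X$ the family $\{T_\alpha(x)_{|E}:\alpha\in I\}$ is dense in $C(E)$ for quasi all $E\in{\cal K}(\T)$. I would finally note that the same argument works verbatim with ${\cal K}(E_0)$ in place of ${\cal K}(\T)$, yielding the localized version mentioned before the proofs.
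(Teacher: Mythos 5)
Your proof is correct and follows essentially the same route as the paper: you reduce denseness in $C(E)$ to countably many open conditions via a countable dense family of functions (Tietze plus Weierstra{\ss}), verify openness in the Hausdorff metric from the (uniform) continuity of $T_\alpha(x)-g_k$, and handle the uncountability of finite sets exactly as the paper does, by intersecting the residual sets $X_F$ only over the countable dense family of finite subsets of a countable dense $D\subset\T$. The only cosmetic difference is that you check each ${\cal O}_{k,m}(x)$ is dense and invoke Baire in ${\cal K}(\T)$, where the paper observes directly that the $G_\delta$-set ${\cal E}_x$ contains the dense family ${\cal F}$ and is therefore residual.
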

\begin {proof} 

1. Let $x\in X$ be fixed. Then
\[
{\cal E}_x := \big\{ E \in {\cal K}(\T): \{T_\alpha(x)_{|E}: \alpha \in I\} \mbox{ dense in } C(E)\big\}
\]
is a $G_\delta$-set in ${\cal K}(\T)$. 

Indeed: Let $Q$ denote the set of all trigonometric polynomials with coefficients from $\Q + i\Q$. 
Then $Q$ is countable and dense in $\big( C(E),\|\cdot\|_E\big)$ according to Tietze's extension theorem and the Weierstra{\ss} approximation theorem. Now
\[
{\cal{E}}_x = \bigcap\limits_{k\in\N \atop q\in Q}\, \bigcup\limits_{\alpha\in I}
\Big\{ E\in {\cal K}(\T): \| T_\alpha(x) - q\|_E < \frac{1}{k}\Big\}.
\]
Since $T_\alpha(x) - q$ is uniformly continuous on $\T$, we see that
$$
\{E\in {\cal K}(\T): \| T_\alpha(x) - q\|_E < \frac{1}{k}\}
$$
is open in ${\cal K}(\T)$, and this shows that ${\cal E}_x$ is a $G_\delta$-set in ${\cal K}(\T)$.\\

2. We fix a countable dense subset $M$ of $\T$. Let ${\cal E}$ denote the family of all finite subsets of $M$. Then ${\cal E}$ is countable and dense in ${\cal K}(\T)$. According to Baire's theorem,
\[
 \bigcap\limits_{E\in{\cal E}} \{ x\in X: \{T_\alpha(x)_{|E}: \alpha \in I\} \mbox{ dense in }\C^E \}
\]
is a residual subset of $X$. For all $x$ in this set we have ${\cal E} \subset {\cal E}_x$ and thus the first part of proof shows that ${\cal E}_x$ is $G_\delta$-dense in ${\cal K}(\T)$.
\end{proof}

Universit\"at Trier, FB IV, Mathematik, D-54286 Trier, Germany

\end{document}